\newtheorem{theorem}{Theorem}[section]
\theoremstyle{definition}
\newtheorem{definition}[theorem]{Definition}
\theoremstyle{remark}
\newtheorem{remark}[theorem]{Remark}
\numberwithin{equation}{section}
\journal{XXX}
\begin{document}

\begin{frontmatter}

\title{Liouville's theorem for the generalized harmonic function\\ [2ex] \begin{footnotesize} \textit{This paper is dedicated to our advisors} \end{footnotesize} }
\tnotetext[label0]{This work was supported by National Natural Science Foundation of China (Grant No.11471309, 11771423), Putian University Research Projects (2016095) and Teaching Reform Project in Putian University (No. JG201524, No. JG201628).}
\author[label1]{Weihua Wang\corref{cor1}}
\address[label1]{School of Mathematical Sciences, University of Chinese Academy of Sciences, Beijing, P.R.China 100049}
\address[label2]{School of Mathematics,~Putian University, Putian 351100, P.R. China\fnref{label4}}
\cortext[cor1]{Corresponding author}

\ead{wangvh@163.com}

\author[label2]{Qihua Ruan}


\begin{abstract}
In this paper, we give a more physical proof of Liouville's theorem for a class generalized harmonic functions by the method of parabolic equation.
\end{abstract}



\begin{keyword}
Liuville's theorem; generalized  harmonic functions; parabolic equation\\
\textbf{2010 Mathematics Subject Classification: }{Primary  35J15; Secondary  35K40}
\end{keyword}

\end{frontmatter}


\section*{}
Liouville's theorem for harmonic functions is based on the mean value property (\cite{Ev}), which has a self-evident advantage  that the theorem is derived via
Harnack's inequality under a weaker assumption that the function is bounded below (\cite{GT}). Miyazaki \cite{Mi} give a proof of Liouville's theorem for harmonic functions by the method of heat kernels, which reflects the physical essence that there is no more heat exchange after the system of heat diffusion
reaches thermal equilibrium. Inspired by Miyazaki's method \cite{Mi},  we  present a proof of
Liouville's theorem  for a class generalized harmonic function by the method of parabolic equation. In fact, we generalize the result of Theorem 1 in \cite{Mi}.
\begin{definition}\label{def1}
We say $u(x)$ is a generalized harmonic function in $\Omega $ in $\mathbb{R}^3$ if there exists  a constant vector $\boldsymbol{c}$ in $\mathbb{R}^d$ such that  $u$ satisfy $-\triangle u + \boldsymbol{c}\cdot\nabla u=0$ in the distributional sense:
\begin{equation}\label{eq1}
  \int_{\Omega}u(x)(\triangle \varphi(x) + \boldsymbol{c}\cdot\nabla \varphi(x))dx=0
\end{equation}
 for all  $\varphi\in C_0^{\infty}(\Omega)$, where $\triangle= \sum_{j=1}^d \frac{\partial^2}{\partial x_j^2}$, $\nabla=(\frac{\partial}{\partial x_1},\cdots, \frac{\partial}{\partial x_d} )$.
\end{definition}

\begin{remark}
 In definition\ref{def1}, u is a classic harmonic function in the distributional sense when $\boldsymbol{c}\equiv0$.
\end{remark}

\begin{theorem}[Liouville's theorem]\label{thm1}
 Let the generalized harmonic function $u$ is  bounded continuous  in $\mathbb{R}^d$, then $u$ must be a constant.
\end{theorem}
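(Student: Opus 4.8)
The plan is to mimic Miyazaki's heat–kernel argument, replacing the heat semigroup by the one generated by the convection–diffusion operator $\partial_t-\triangle+\boldsymbol c\cdot\nabla$, whose fundamental solution is an explicit shifted Gaussian. First I would record that the Cauchy problem for $\partial_t v=\triangle v-\boldsymbol c\cdot\nabla v$ has fundamental solution
$$\Gamma(t,x)=(4\pi t)^{-d/2}\exp\!\Big(-\frac{|x-\boldsymbol c\,t|^2}{4t}\Big),\qquad t>0,$$
which one checks directly, or derives from the standard heat kernel $G(t,x)=(4\pi t)^{-d/2}e^{-|x|^2/4t}$ via the Galilean change of variables $x\mapsto x+\boldsymbol c\,t$ that turns $\partial_t-\triangle+\boldsymbol c\cdot\nabla$ into $\partial_t-\triangle$. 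Note that $\Gamma(t,\cdot)\in L^1(\mathbb R^d)$ with $\int_{\mathbb R^d}\Gamma(t,\cdot)=1$, that $\Gamma$ and all its $x$-derivatives decay like Gaussians in $x$, and that $\Gamma(t,\cdot)\to\delta_0$ as $t\downarrow0$.

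Next, set $U(t,x):=\int_{\mathbb R^d}\Gamma(t,x-y)\,u(y)\,dy$ for $t>0$. Since $u$ is bounded and $\Gamma$ together with all its derivatives is integrable in $y$, $U$ is well defined, bounded by $\|u\|_\infty$, and smooth for $t>0$ with derivatives obtained by differentiating under the integral sign. The crucial claim is that $U$ does not depend on $t$. Using $\triangle_x\Gamma(t,x-y)=\triangle_y\Gamma(t,x-y)$, $\nabla_x\Gamma(t,x-y)=-\nabla_y\Gamma(t,x-y)$ and the identity $\partial_t\Gamma=\triangle_x\Gamma-\boldsymbol c\cdot\nabla_x\Gamma$, one obtains
$$\partial_t U(t,x)=\int_{\mathbb R^d}\big(\triangle_y\Gamma(t,x-y)+\boldsymbol c\cdot\nabla_y\Gamma(t,x-y)\big)\,u(y)\,dy .$$
If $y\mapsto\Gamma(t,x-y)$ were compactly supported this would vanish by \eqref{eq1}; in general I would insert a cutoff $\chi_R$, apply \eqref{eq1} to $\varphi(y)=\chi_R(y)\Gamma(t,x-y)$, and let $R\to\infty$, the error terms going to zero because $u$ is bounded and $\Gamma$, $\nabla\Gamma$, $\triangle\Gamma$ are Schwartz in $y$. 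Hence $\partial_t U\equiv0$. On the other hand, since $u$ is bounded and continuous and $\Gamma(t,\cdot)$ is an approximate identity, $U(t,x)\to u(x)$ as $t\downarrow0$. Combining, $U(t,x)=u(x)$ for every $t>0$ and every $x\in\mathbb R^d$; that is, $u$ reproduces itself under the convection–diffusion semigroup.

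Finally, fix $x_1,x_2\in\mathbb R^d$ and put $h=x_1-x_2$. For every $t>0$,
$$|u(x_1)-u(x_2)|=\Big|\int_{\mathbb R^d}\big(\Gamma(t,x_1-y)-\Gamma(t,x_2-y)\big)u(y)\,dy\Big|\le\|u\|_\infty\,\big\|\Gamma(t,\cdot)-\Gamma(t,\cdot-h)\big\|_{L^1}.$$
Translating by $\boldsymbol c\,t$ leaves the $L^1$ norm unchanged, so the last factor equals $\|G(t,\cdot)-G(t,\cdot-h)\|_{L^1}$; by the scaling $G(t,x)=t^{-d/2}G(1,x/\sqrt t)$ this equals $\|G(1,\cdot)-G(1,\cdot-h/\sqrt t)\|_{L^1}$, which tends to $0$ as $t\to\infty$ by continuity of translations in $L^1$. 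Letting $t\to\infty$ gives $u(x_1)=u(x_2)$, and since $x_1,x_2$ were arbitrary, $u$ is constant.

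The main obstacle is the middle paragraph: converting the distributional identity \eqref{eq1}, valid a priori only against $C_0^\infty$ test functions, into the statement that the Gaussian mollification $U(t,x)$ is $t$-independent — this needs the cutoff-and-limit argument together with the justification of differentiation under the integral, whereas the first and last paragraphs are routine Gaussian estimates. (Alternatively one could first invoke hypoellipticity of $-\triangle+\boldsymbol c\cdot\nabla$ to get $u\in C^\infty(\mathbb R^d)$ and then appeal to uniqueness of bounded solutions of the parabolic Cauchy problem, but the direct cutoff argument is more self-contained.)
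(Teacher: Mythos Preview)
Your proposal is correct and follows essentially the same route as the paper: both introduce the shifted Gaussian $\Gamma(t,x)=(4\pi t)^{-d/2}\exp\!\big(-|x-\boldsymbol c\,t|^2/4t\big)$ as fundamental solution of $\partial_t-\triangle+\boldsymbol c\cdot\nabla$, convolve it with $u$, show the result is $t$-independent by converting $x$-derivatives to $y$-derivatives and invoking \eqref{eq1}, and then exploit the large-$t$ behavior of the kernel.

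Two minor differences are worth noting. First, for the extension of \eqref{eq1} from $C_0^\infty$ to Gaussian test functions the paper simply invokes density of $C_0^\infty(\mathbb R^d)$ in $\mathscr S(\mathbb R^d)$ together with boundedness of $u$, which is a bit quicker than your cutoff-and-limit argument (though both are valid). Second, the endgames diverge: the paper differentiates the identity $u=\Gamma(t,\cdot)\ast u$ to obtain $|\partial_{x_j}u(x)|\le\|\partial_{x_j}\Gamma(t,\cdot)\|_{L^1}\|u\|_{L^\infty}=C\,t^{-1/2}\|u\|_{L^\infty}\to0$, hence $\nabla u\equiv0$; you instead compare $u(x_1)$ and $u(x_2)$ directly and use continuity of translations in $L^1$ after scaling. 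The paper's version gives an explicit decay rate and shows $u\in C^\infty$ along the way; yours avoids differentiating $u$ altogether and makes the role of the approximate-identity structure more transparent. Both are standard finishing moves for heat-kernel Liouville arguments.
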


\begin{proof}
Let
\begin{equation*}{
  K(x,t)=
  \left\{
  \begin{array}{ll}
  (4\pi t)^{-\frac{d}{2}}\exp\left(-\frac{(x-\boldsymbol{c}t)^2}{4t}\right), &(s\in \mathbb{R}^d, t>0)\\
  0 &(s\in\mathbb{R}^d, t<0),
  \end{array}
\right.}
\end{equation*}
\normalsize then for any non-zero $t\in\mathbb{R}$, $K(x,t)$ is a function in the Schwartz space $\mathscr{S}(\mathbb{R}^d)$ and satisfied the  parabolic equation
\begin{equation*}
  \partial_t K(x,t)-\triangle K(x,t)+\boldsymbol{c}\cdot\nabla K(x,t)=0
\end{equation*}
and
\begin{equation*}
  \int_{\mathbb{R}^d}K(x,t)dx=1.
\end{equation*}
We define the function
\begin{eqnarray*}
  v(x,t) &=& K(\cdot, t)\ast u(x)\\
         &=& \int_{\mathbb{R}^d}K(x-y,t)u(y)dy,
\end{eqnarray*}
which is in $C^{\infty}(\mathbb{R}^d\times\mathbb{R}_+)$. We notice the equation \eqref{eq1} also holds for all $\varphi$ in  $\mathscr{S}(\mathbb{R}^d)$ by the bound of $u$, since $C_0^{\infty}(\mathbb{R}^d)$ is dense in  $\mathscr{S}(\mathbb{R}^d)$. By the properties of $K(x,t)$ and assumption on $u$ we have

\begin{eqnarray*}
  & &\partial_t  v(x,t)\\
  &=& \int_{\mathbb{R}^d}\partial_t K(x-y,t)u(y)dy \\
   &=& \int_{\mathbb{R}^d}\{(\triangle_x -\boldsymbol{c}\cdot\nabla_x)K(x-y,t)\}u(y)dy \\
   &=& \int_{\mathbb{R}^d}\{(\triangle_y +\boldsymbol{c}\cdot\nabla_y)K(x-y,t)\} u(y)dy\\
   &=&0.
\end{eqnarray*}
Hence $v(x,t)$ is independent of $t$. Since $\lim_{t\to 0^+}v(x,t)=u(x)$, we have
$$v(x,t)=u(x),$$ which implies that $u$ is a $C^{\infty}$ function. Differentiating in $x_j$ gives

\begin{equation*}
  \partial_{x_j}u(x) =  \int_{\mathbb{R}^d}\partial_{x_j}K(x-y,t)u(y)dy,
\end{equation*}
hence, \begin{eqnarray*}
         & &|\partial_{x_j}u(x)| \\
         &\leq& \|\partial_{x_j}K(x,t)\|_{L^1(\mathbb{R}^d)} \|u\|_{L^{\infty}(\mathbb{R}^d)}\\
                               &=&\int_{\mathbb{R}^d}\left|(4\pi t)^{-\frac{d}{2}}\exp\left(-\frac{(x-\boldsymbol{c}t)^2}{4t}\right)\frac{(x_i-\boldsymbol{c}_it)}{2t}\right|dx \|u\|_{L^{\infty}(\mathbb{R}^d)}\\
                               &=& t^{-\frac{1}{2}}\int_{\mathbb{R}^d}\left|(4\pi )^{-\frac{d}{2}}\exp(-y^2)y_i\right|dy\|u\|_{L^{\infty}(\mathbb{R}^d)}.
       \end{eqnarray*}
   Here, the replacement $$y=\frac{x-\boldsymbol{c}t}{2t^{\frac{1}{2}}}$$ is used  in the last equation of the above equations.\\
    Let $t\to +\infty$, and we obtain $\partial_{x_j}u(x) =0$ for all $x\in \mathbb{R}^d$ and $1\leq j\leq d$. Therefore $u(x)$ is a constant.
\end{proof}
\begin{remark}
 In theorem \ref{thm1}, we obtain immediately the result of theorem 1 in \cite{Mi} just by letting $\boldsymbol{c}=0$.
\end{remark}

\section*{Acknowledgments}
Two authors would like to thank the referees for their valuable suggestions.

\section*{References}
\bibliographystyle{elsarticle-num}


\begin{thebibliography}{10}
\bibitem{Ev}  L.C. Evans, \textit{Partial Differential Equations, second ed.}, AMS, Providence, 2010.
\bibitem{GT}D. Gilbarg, N.S. Trudinger, \textit{Elliptic Partial Differential Equations of Second Order}, Springer, Berlin/
Heidelberg, 1998.
\bibitem{Mi} Y. Miyazaki, \textit{Liouville's theorem and heat kernels}, Expo. Math., 33 (2015), pp. 101-104.
\end{thebibliography}

\end{document}